\def\R{\mathbb{R}}
\def\Z{\mathbb{Z}}
\def\del{\partial}
\DeclareMathOperator{\sym}{Sym}
\DeclareMathOperator{\codim}{codim}
\DeclareMathOperator{\sing}{sing}
\DeclareMathOperator{\Ima}{Im}
\newtheorem{theorem}{Theorem}[section]
\newtheorem{corollary}{Corollary}[theorem]
\newtheorem{lemma}[theorem]{Lemma}
\theoremstyle{remark}
\newtheorem*{remark}{Remark}
\title{A Local Condition for Totally Skew Embeddings}
\author{Zachary Norfolk}
\date{October 2024}
\begin{document}

\maketitle

\begin{abstract}
    We introduce a third-order differential condition, analogous to nonzero torsion of a curve, which guarantees a submanifold of Euclidean space is totally skew in a small neighborhood. This condition is used to construct improved totally skew embeddings of $\R^n$, and to solve the totally skew embedding problem for $\R^n$ with $n$ a power of 2. Some algebraic and geometric properties of this condition are also discussed. 
\end{abstract}

\section{Introduction}
Let $M$ be a submanifold of $\R^N$ and $p,q$ be distinct points in $M$.
A pair of tangent lines at $p$ and $q$ consists of a line tangent to $M$ at $p$, and a line tangent to $M$ at $q$. This pair is skew if these tangent lines are not parallel, and do not intersect. If all  pairs of tangent lines at $p$ and $q$ are skew, we say that $M$ is totally skew at $p$ and $q$. 

An embedding $f:M \to \R^N$ is a totally skew embedding if $f(M)$ is totally skew at all $p, q \in f(M)$ with $p\neq q$. Totally skew embeddings were first studied in \cite{Ghomi} with the goal being to determine the smallest dimension Euclidean space needed for a totally skew embedding of $M$ to exist. Following the notation of \cite{Ghomi}, we denote this minimum dimension by $N(M)$. The problem of computing $N(M)$ is in general difficult, and had been done for only three manifolds:
$$N(\R^1) = 3 \hspace{2em} N(S^1) = 4 \hspace{2em} N(\R^2) = 6.$$
In Corollary \ref{sol}, we provide a solution of $N(\R^n)$ when $n$ is a power of 2, but the general result still remains unknown even for Euclidean spaces. On the other hand, much more is known about upper and lower bounds on $N(M)$. Methods for computing these bounds using topological properties of $M$ were introduced in \cite{Baralic}, \cite{Blagojevic}, and \cite{Ghomi}, and were applied to a variety of manifolds. The simplest example of such a bound is given by $2n+1 \leq N(M) \leq 4n+1$ where $n = \dim(M)$. 

Replacing the condition that lines tangent at $p$ and lines tangent at $q$ are skew with the weaker condition that they are only non-parallel leads to the more recent notion of totally non-parallel immersions. These were introduced and studied in \cite{Harrison}, with a key result being the existence of a second-order differential condition ensuring a manifold is totally non-parallel in small neighborhoods. The question of whether such a local condition exists for totally skew embeddings was asked and briefly discussed in \cite{Harrison}, but left as an open problem.

\subsection{Summary of Results}

The main result of this paper is to show that this local condition does exist, i.e. that there is a third-order differential condition ensuring a manifold is totally skew in small neighborhoods. The statement of this condition and its proof may be found in Section 2. This local condition can be used to construct new examples of totally skew embeddings of $\R^n$, which we carry out in Section 3. Except for the cases $n = 1$ or $2$, these constructions require fewer dimensions in the codomain than the previously best known constructions of \cite{Ghomi}. In Section 4 we present a geometrical reformulation of the local condition analogous to the torsion of a curve in $\R^3$. In fact, for curves in $\R^3$, the local condition is equivalent to the requirement of nonzero torsion. We also show that a generic smooth map $f:M \to \R^{4n}$ satisfies the local condition at all points, by examining the space of degree 3 polynomials. This is an analogue to Proposition 2.3 of \cite{Ghomi} that a generic smooth map $f:M \to \R^{4n+1}$ is totally skew, as well as Theorem 2.6 of \cite{Harrison} that a generic smooth map $f:M \to \R^{4n-1}$ satisfies the second-order (non-parallel) local condition at all points. 

\subsection{Acknowledgements}
I am very grateful to Sergei Tabachnikov for his many valuable suggestions and comments, and for introducing me to the problem. 
I would also like to thank Masato Tanabe for a helpful discussion related to Section 4.2. 
This work was completed with partial support from NSF grant DMS-2005444 and partial support from the Anatole Katok Center for Dynamical Systems and Geometry.

\section{The Local Condition}
Let $f: \R^{n} \to \R^N$ be a smooth map, and fix some $a \in \R^n$. We will use $D^kf_a \in \hom(\sym^k(\R^n), \R^N)$ to denote the symmetric $k$-linear map of $k^{th}$ order partial derivatives of $f$. On the standard basis $\{e_1, \hdots, e_n\}$, this is defined by 
$$D^kf_a(e_{i_1}, \hdots, e_{i_k}) := \frac{\del^k f}{\del_{x_{i_1}} \hdots \del_{x_{i_k}}}(a).$$
In particular, $D^1f_a$ is just the usual derivative of $f$ at $a$, denoted $Df_a$.
\begin{theorem}
\label{lc}
    Suppose $Df_a$, $D^2f_a$, and $D^3f_a$ have the property that solutions $(v_1, v_2, v_3, \lambda) \in \R^n \times \R^n \times \R^n \times \R$ of the equation
    $$Df_a(v_1) +D^2f_a(v_2, v_3) + \lambda D^3f_a(v_3,v_3,v_3) = 0$$
    with $v_3 \neq 0$ must have $v_1 = 0, v_2 = 0$ and  $\lambda = 0$. Then $f$ is a totally skew embedding when restricted to some open neighborhood of $a$. 
\end{theorem}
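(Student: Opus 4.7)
The plan is to argue by contradiction, combining a Taylor expansion of $f$ near $a$ with a compactness-and-rescaling argument.

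Preliminary: first I would observe that applying the hypothesis with $v_2 = 0$, $\lambda = 0$, and any $v_3 \neq 0$ reduces to $Df_a(v_1) = 0 \implies v_1 = 0$, so $Df_a$ is injective. Thus $f$ is an immersion at $a$, and hence an embedding on some open neighborhood $U$ of $a$; I would work inside $U$.

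Next, suppose toward contradiction that $f$ fails to be totally skew on every neighborhood of $a$. Then I can extract sequences $p_k, q_k \to a$ with $p_k \neq q_k$, unit vectors $u_k, w_k$, and coefficients $(\alpha_k, \beta_k, \gamma_k) \in S^2$ satisfying
\[
\alpha_k \big(f(p_k) - f(q_k)\big) + \beta_k Df_{p_k}(u_k) + \gamma_k Df_{q_k}(w_k) = 0.
\]
Setting $h_k = q_k - p_k$, $t_k = |h_k|$, $\hat h_k = h_k/t_k$, I would pass to subsequences so that $\hat h_k, u_k, w_k, (\alpha_k, \beta_k, \gamma_k)$ converge, with limits $\hat h_*, u_*, w_*, (\alpha_*, \beta_*, \gamma_*) \in S^2$. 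Taylor-expanding $f(q_k) - f(p_k)$ and $Df_{q_k}(w_k)$ around $p_k$ to third order in $h_k$ rewrites the dependence as
\[
Df_{p_k}(V_1) + D^2 f_{p_k}(h_k, V_2) + D^3 f_{p_k}(h_k, h_k, V_3) = O(t_k^4),
\]
with $V_1 = \beta_k u_k + \gamma_k w_k - \alpha_k h_k$, $V_2 = \gamma_k w_k - \tfrac{\alpha_k}{2} h_k$, and $V_3 = \tfrac{\gamma_k}{2} w_k - \tfrac{\alpha_k}{6} h_k$. This almost matches the hypothesis with $v_3 = h_k$; the obstruction is the off-diagonal piece $\tfrac{\gamma_k}{2} D^3 f(h_k, h_k, w_k)$ inside the third-order term, which is not of the form $\lambda D^3 f(v_3, v_3, v_3)$.

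The core of the proof would be an iterated rescaling. Dividing by $t_k$ and letting $t_k \to 0$, injectivity of $Df_a$ first gives $\beta_* u_* + \gamma_* w_* = 0$; then a first application of the hypothesis (with $\lambda = 0$, $v_3 = \hat h_*$, $v_2 = \gamma_* w_*$) forces $\gamma_* = 0$, and running the symmetric expansion around $q_k$ yields $\beta_* = 0$. Hence $\alpha_* = \pm 1$. Writing $\tilde\beta_k = \beta_k/t_k$ and $\tilde\gamma_k = \gamma_k/t_k$ (bounded after a further subsequence, since $(\beta_k u_k + \gamma_k w_k)/t_k \to \alpha_* \hat h_*$ by the preceding step), I would divide by another power of $t_k$ and apply the hypothesis a second time; this forces $u_*$ and $w_*$ to be parallel to $\hat h_*$, with $\tilde\gamma_* w_* = \tfrac{\alpha_*}{2} \hat h_*$ and an analogous relation for $\tilde\beta_* u_*$. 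This is the key collapse: with $w_*$ aligned along $\hat h_*$, the cross term $D^3 f_a(\hat h_*, \hat h_*, w_*)$ becomes a scalar multiple of $D^3 f_a(\hat h_*, \hat h_*, \hat h_*)$.

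One more division by $t_k$ would then produce a limiting identity of the exact hypothesis form
\[
Df_a(v_1) + D^2 f_a(v_2, \hat h_*) + \tfrac{\alpha_*}{12} D^3 f_a(\hat h_*, \hat h_*, \hat h_*) = 0
\]
for some $v_1, v_2 \in \R^n$, where the coefficient $\alpha_*/12 = -\alpha_*/6 + \alpha_*/4$ combines the diagonal contribution $-\alpha_*/6$ from $V_3$ with the piece $\tilde\gamma_* w_*/2$ now collapsed along $\hat h_*$. Applying the hypothesis with $v_3 = \hat h_* \neq 0$ then forces $\alpha_*/12 = 0$, contradicting $|\alpha_*| = 1$. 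The main obstacle will be the careful bookkeeping of the rescalings: at each stage one must verify that the rescaled quantities remain bounded so that convergence holds along further subsequences, match the limits correctly with the variables $(v_1, v_2, v_3, \lambda)$ of the hypothesis, and track sign ambiguities in the alignments $u_* = \pm \hat h_*, w_* = \pm \hat h_*$ to confirm that the final coefficient $\alpha_*/12$ really is nonzero.
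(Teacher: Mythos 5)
Your overall strategy---contradiction via a failing sequence, Taylor expansion about $p_k$, and iterated rescaling by powers of $t_k$---is sound, and it is essentially the sequential form of the paper's argument; your final coefficient $\alpha_*/12$ is exactly right (it matches the $-12/t^3$ entry of the change-of-basis matrix in the paper's Lemma 2.3). However, there is a genuine gap at the step where you pass from $\beta_*=\gamma_*=0$ to boundedness of $\tilde\beta_k=\beta_k/t_k$ and $\tilde\gamma_k=\gamma_k/t_k$. The justification you offer---that $(\beta_k u_k+\gamma_k w_k)/t_k\to\alpha_*\hat h_*$---does not imply the individual quotients are bounded, because $u_k$ and $w_k$ may nearly cancel: take $u_k=w_k=\hat h_k$, $\beta_k=\sqrt{t_k}$, $\gamma_k=\alpha_k t_k-\sqrt{t_k}$; then the sum divided by $t_k$ still converges to $\alpha_*\hat h_*$ while $\beta_k/t_k\to\infty$. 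The same unsupported boundedness claims recur at the final stage (for $V_1^k/t_k^3$ and $V_2^k/t_k^2$), which you describe as ``bookkeeping'' but which is really the crux of the argument. The claims are true, but each one needs a further idea, e.g.\ a renormalization: if $\tilde\gamma_k\to\infty$ along a subsequence, divide the once-rescaled equation by $\tilde\gamma_k$ and pass to the limit to obtain $Df_a(x)+D^2f_a(\hat h_*,w_*)=0$, so that the hypothesis forces the \emph{unit} vector $w_*$ to vanish, a contradiction; an analogous ``divide by the largest rescaled quantity and contradict unit norm'' argument is needed at the later stages. (A smaller issue: until $\gamma_k=O(t_k)$ is established, your remainder is $O(t_k^4)+|\gamma_k|\,O(t_k^3)$, not $O(t_k^4)$, because the expansion of $Df_{q_k}(w_k)$ enters with coefficient $\gamma_k$.)

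For comparison, the paper avoids all of this with one device: it composes $F(p,q)=(Df_p,Df_q,f(q)-f(p))$ with an explicit invertible $(2n+1)\times(2n+1)$ matrix depending on $(y,t)$, producing a map $\widetilde F$ whose three blocks are $Df_a$, $(Df_{a+ty}-Df_a)/t$, and $\bigl(6(Df_{a+ty}(ty)+Df_a(ty))-12(f(a+ty)-f(a))\bigr)/t^3$; Taylor's theorem shows $\widetilde F$ extends continuously to $t=0$ with value $(Df_a,\,D^2f_a(y,\cdot),\,D^3f_a(y,y,y))$, the hypothesis says this limit has full rank $2n+1$, and openness of the full-rank locus together with the tube lemma over the compact sphere fiber finishes the proof. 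In your sequential language this amounts to normalizing the single kernel vector $(V_1^k,\;t_kV_2^k,\;\alpha_k t_k^3/12)\in\R^{2n+1}$ to unit length and extracting one subsequential limit, which is then a nonzero kernel vector of the limiting map---no separate boundedness claims for $\beta_k/t_k$, $\gamma_k/t_k$, etc.\ are ever needed. I recommend either adopting that normalization or supplying the renormalization arguments sketched above at each stage.
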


If we fix $\lambda = 0$ so that the third-order term is not present, we recover something equivalent to the local condition of \cite{Harrison} mentioned in the introduction. This second-order condition requires  $D^2f_a$ to be nonsingular:
\begin{align*}
    D^2f_a(x, y) = 0 \implies x = 0 \text{ or } y = 0,
\end{align*}
and $\Ima Df_a \,\cap \,\Ima D^2f_a = \{0\}$. If this condition is satisfied, then locally there will be no parallel tangent lines near $a$. For intersecting tangent lines the situation is much different as no second-order condition can prevent their existence. For if such a condition exists, we may find some $L \in \hom(\R^n, \R^N)$ and $B\in \hom(\sym^2(\R^n), \R^N)$ such that the map
$$x\mapsto L(x) + B(x,x)$$
satisfies this condition at $x =0$. But for any nonzero $v \in \R^n$, the curve 
$$t \mapsto L(tv) + B(tv, tv)$$
is contained in the plane spanned by $L(v)$ and $B(v,v)$, and any planar curve has intersecting tangent lines. As we will later see, the third-order part of Theorem \ref{lc} prevents these planar curves from existing, essentially by causing the first three derivatives of any curve to be linearly independent. 

\begin{remark}
A $T$-embedding is another variant on totally skew embeddings which comes from replacing the requirement that tangent lines at $p$ and tangent lines at $q$ are skew with the weaker condition that they are only non-intersecting (see \cite{Ghomi2}, \cite{Stojanovic}). It is unclear if a weaker local condition for $T$-embeds exists; one that does not necessarily prevent parallel lines. The fact that two $n$-planes without  intersecting or parallel lines remain without intersecting or parallel lines after small perturbations is needed in the proof of Theorem \ref{lc}. If we only require the $n$-planes to not intersect, this is no longer an open condition.   
\end{remark}

\subsection{Proof of the Local Condition}
We will associate to $f:\R^n \to \R^N$ a map $$F: (\R^n \times \R^n - \Delta) \to \hom(\R^{2n+1}, \R^N)$$ where $\Delta$ denotes the diagonal and $\hom(\R^{2n+1}, \R^N)$ the space of linear transformations $\R^{2n+1}\to \R^N$. Identifying  $\R^{2n+1}$ with 
$\R^n \oplus \R^n \oplus \R$ gives us three coordinates on $\hom(\R^{2n+1}, \R^N)$, coming from the splitting
$$\hom(\R^{2n+1}, \R^N) \cong \hom(\R^{n}, \R^N) \oplus \hom(\R^n, \R^N) \oplus \R^N.$$
Using these coordinates, we define
$$F(p, q) := (Df_p, Df_q, f(q)-f(p)).$$
Explicitly, $F(p,q)$ acts on $(v_1, v_2, \lambda) \in \R^{2n+1}$ by 
\begin{align}
\label{eq1}
    (v_1, v_2, \lambda) \mapsto Df_p(v_1) + Df_q(v_2) + \lambda(f(q) - f(p)).
\end{align}

We will let $\mathcal{U} \subset \hom(\R^{2n+1}, \R^N)$ denote the linear transformations with rank $2n+1$. 

\begin{lemma}
\label{lem1}
    Assume $f$ is an embedding of some open subset $V$. Then $f(V)$ is totally skew at $f(p)$ and $f(q)$ if and only if  $F(p,q) \in \mathcal{U}$.
\end{lemma}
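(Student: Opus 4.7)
The plan is to reduce the statement to the injectivity of the linear map $F(p,q)$: since $F(p,q) \colon \R^{2n+1}\to\R^N$, membership in $\mathcal{U}$ is equivalent to having trivial kernel. So the goal becomes an equivalence between existence of a nonzero $(v_1,v_2,\lambda)$ with $Df_p(v_1)+Df_q(v_2)+\lambda(f(q)-f(p))=0$ and the failure of the totally skew condition at $f(p),f(q)$. I would prove each direction by explicitly matching kernel elements to either parallel or intersecting tangent-line pairs.

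For the forward direction (not skew $\Rightarrow$ nontrivial kernel), I would split into the two ways totally skew can fail. If some tangent lines through $f(p)$ and $f(q)$ in directions $Df_p(v_1), Df_q(v_2)$ (with $v_1,v_2\ne 0$) are parallel, then $Df_p(v_1)=c\,Df_q(v_2)$ for some $c\ne0$, yielding the kernel element $(v_1,-cv_2,0)$. If instead they intersect, then $f(p)+tDf_p(v_1)=f(q)+sDf_q(v_2)$ for some $t,s\in\R$, which rearranges to $Df_p(tv_1)+Df_q(-sv_2)+(-1)(f(q)-f(p))=0$, giving the nonzero element $(tv_1,-sv_2,-1)$.

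For the reverse direction I would use the fact that an embedding is in particular an immersion and injective. Given a nonzero $(v_1,v_2,\lambda)$ in the kernel, I split on whether $\lambda=0$. If $\lambda=0$, then $Df_p(v_1)=-Df_q(v_2)$; the injectivity of $Df_p$ and $Df_q$ forces both $v_1$ and $v_2$ to be nonzero, producing a pair of parallel tangent directions. If $\lambda\ne 0$ and both $v_1,v_2$ are nonzero, dividing by $\lambda$ gives $f(p)-\tfrac{1}{\lambda}Df_p(v_1)=f(q)+\tfrac{1}{\lambda}Df_q(v_2)$, an explicit common point of two tangent lines. If exactly one of $v_1,v_2$ is zero, say $v_1=0$, then $Df_q(v_2)=\lambda(f(p)-f(q))$ shows the tangent line at $f(q)$ in direction $Df_q(v_2)$ passes through $f(p)$, so it meets every tangent line at $f(p)$ there. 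The remaining case $v_1=v_2=0$ is ruled out because it forces $f(p)=f(q)$, contradicting injectivity of $f$.

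The main obstacle is really just bookkeeping: the definition of non-skew combines two separate conditions (parallel or intersecting), and the bijection between kernel elements and failure configurations must keep track of the degenerate subcases in which some of $v_1,v_2$ vanish or the intersection parameters $t,s$ equal zero. Once the immersion and injectivity of $f$ are invoked to eliminate the degenerate possibilities, each nonzero kernel vector corresponds cleanly to a concrete witness of non-skewness, giving the lemma.
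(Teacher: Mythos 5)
Your proof is correct and follows essentially the same route as the paper: identify membership in $\mathcal{U}$ with triviality of $\ker F(p,q)$, and match kernel vectors with $\lambda=0$ to parallel tangent lines and those with $\lambda\neq 0$ to intersecting ones. You simply spell out the degenerate subcases (e.g.\ $v_1=0$ or $v_2=0$) that the paper's terser argument leaves implicit.
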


\begin{proof}
    Identify the tangent spaces at $f(p)$ and $f(q)$ with the images of the affine  maps $f(p) + Df_p(\cdot)$ and $f(q) + Df_{q}(\cdot)$. These tangent spaces contain a parallel line if and only if $Df_p(v_1) = Df_{q}(v_2)$ has a nontrivial solution for $(v_1, v_2)$. They contain an intersecting line if and only if $f(p) + Df_p(v_1) = f(q) + Df_q(v_2)$ has any solution for $(v_1, v_2)$. 
    
    Therefore, a nonzero vector in the kernel of $F(p,q)$ (see (\ref{eq1})) with $\lambda = 0$ corresponds to parallel tangent lines, and one with $\lambda \neq 0$ can be re-scaled to have $\lambda = 1$, which corresponds to intersecting tangent lines.
\end{proof}

The proof of Theorem \ref{lc} now consists of two main steps:  a ``spherical blow-up" of $\Delta \subset \R^n \times \R^n$ and modification of $F$ to be defined on the blow-up. The local condition is then related to how this modified function is defined on the boundary of the blow-up. 

\subsubsection{Spherical Blow-Up}
We define the spherical blow-up of $\Delta$
\begin{align*}
    Bl_\Delta := \R^n \times S^{n-1} \times \R_{\geq 0} \hspace{4em}\Phi:Bl_\Delta &\to \R^n \times \R^n
\end{align*}
by $\Phi(a,y,t) = (a, a+ty)$.
The fiber over any point in $\Delta$ is a sphere $S^{n-1}$, while outside of $\Delta$, $\Phi$ gives a diffeomorphism between $\R^n \times S^{n-1} \times \R_{>0}$ and $\R^n \times \R^n - \Delta$. We will modify the function 
$$F\circ\Phi(a,y,t) = (Df_a, Df_{a+ty}, f(a+ty)-f(a))$$
 to be compatible with the Taylor series expansion of $f$. This will be defined $\widetilde F(a,y,t) :=$
\begin{align*}
    \left(Df_a, \frac{Df_{a+ty} - Df_a}{t}, \frac{6(Df_{a+ty}(ty)+Df_a(ty))-12(f(a+ty) - f(a))}{t^3} \right).
\end{align*}

\begin{lemma}
\label{lem2}
    For any $(a,y,t) \in Bl_\Delta$ with $t\neq 0$, we have that $F \circ \Phi(a,y,t) \in \mathcal{U}$ if and only if $\widetilde F(a,y,t) \in \mathcal{U}$. 
\end{lemma}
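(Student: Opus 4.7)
The plan is to exhibit, for each $(a,y,t) \in Bl_\Delta$ with $t \neq 0$, an invertible linear self-map $T = T_{a,y,t}$ of $\R^{2n+1}$ satisfying $t^3\, \widetilde F(a,y,t) = F \circ \Phi(a,y,t) \circ T$. Since $\mathcal{U}$ consists exactly of the injective linear maps $\R^{2n+1} \to \R^N$, and since injectivity is preserved both by precomposition with the invertible $T$ and by multiplication by the nonzero scalar $t^3$, the equivalence would follow immediately: $\widetilde F(a,y,t) \in \mathcal{U}$ iff $F \circ \Phi(a,y,t) \in \mathcal{U}$.

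To produce $T$, I would multiply $\widetilde F(a,y,t)(w_1, w_2, \mu)$ by $t^3$ to clear the denominators $t$ and $t^3$, use the linearity identities $Df_a(ty) = t\,Df_a(y)$ and $Df_{a+ty}(ty) = t\,Df_{a+ty}(y)$, and then regroup terms according to the three ``coordinate'' quantities $Df_a$, $Df_{a+ty}$, and $f(a+ty) - f(a)$. The output of this routine computation can be read off as $F \circ \Phi(a,y,t)$ applied to
$$T(w_1, w_2, \mu) := \bigl(t^3 w_1 - t^2 w_2 + 6t\mu y,\; t^2 w_2 + 6t\mu y,\; -12\mu\bigr),$$
giving the required identity.

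The remaining step is to check that this $T$ is invertible. In block form on $\R^n \oplus \R^n \oplus \R$, $T$ is upper triangular with diagonal blocks $t^3 I_n$, $t^2 I_n$, and $-12$, so its determinant equals $-12\, t^{5n}$, which is nonzero precisely because $t \neq 0$. I do not anticipate a substantive obstacle: the cubic denominator and the particular coefficients $6$ and $-12$ in the definition of $\widetilde F$ appear to have been engineered so that the factors of $t$ balance exactly, making this explicit change of variables available. The main bookkeeping to be careful about is keeping the powers of $t$ consistent when moving $ty$ inside and outside of $Df_a$ and $Df_{a+ty}$.
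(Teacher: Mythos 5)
Your proof is correct and is essentially the paper's own argument: the paper exhibits the same change of variables as an explicit block matrix $B$ with $A_2 = A_1\circ B$, and your $T$ is exactly $t^3 B$, with the scalar $t^3$ absorbed by clearing denominators. The verification of the identity and the invertibility computation both check out.
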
 

\begin{proof}
    Let $A_1 = F\circ \Phi(a,y,t)$ and $A_2 = \widetilde F(a,y,t)$. Consider the following $(2n+1) \times (2n+1)$ matrix, given in block form by 
$$
B= \begin{bNiceMatrix}[hvlines,margin]
\Block{2-3}<> {\mathbf{I}} & & & \Block{2-4}<> {\frac{-1}{t}\mathbf{I}} & & & &\Block{2-1}{\frac{6}{t^2} y} \\
& & && &&  &\\
\Block{2-3}<> {\mathbf{0} }& & & \Block{2-4}<> {\hspace{.12cm}\frac{1}{t}\hspace{.1cm}\mathbf{I}} & & & &\Block{2-1}{\frac{6}{t^2} y} \\
& & & & & & &\\
\Block{1-7}{0 \hspace{1.2em}\hdots \hspace{1.2em} 0} & & & & && & \nicefrac{-12}{t^3}
\end{bNiceMatrix}.$$
Here $\mathbf{I}$ and $\mathbf{0}$ are the $n\times n$ identity and zero matrices, and $y \in S^{n-1}$ is treated as an $n\times 1$ matrix of its components. Clearly $B \in GL(\R^{2n+1})$, and one may compute that $A_2 = A_1\circ B$. Thus, $A_1$ and $A_2$ have the same rank. 
\end{proof}

\subsubsection{Extension of $\widetilde F$}
We now address the issue of extending the domain of $\widetilde F$ to the $S^{n-1}$ fibers over $\Delta$. To do this continuously, we use the following formulation of Taylor's theorem (see Theorem 5.2 of \cite{Coleman}). 
\begin{theorem}[Taylor's Theorem]
    Let $f: \R^n \to \R^m$ be a smooth function. Then for any $a \in \R^n$ and $k \in \Z^+$ there is a Taylor expansion
    $$f(a+x) = f(a) + Df_a(x) + \frac{1}{2}D^2f_a(x,x) + \hdots + \frac{1}{k!}D^kf_a (x^{\otimes k}) + R(a,x)$$
    where $$\|R(a,x)\| \leq \frac{\|x\|^{k+1}}{(k+1)!} \sup_{0\leq s\leq 1}\|D^{k+1}f_{a+s x}\|.$$
\end{theorem}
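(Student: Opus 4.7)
The plan is to reduce the multivariable statement to the classical one-variable Taylor theorem with integral remainder, applied along the line segment from $a$ to $a+x$. Introduce the auxiliary smooth curve $g:[0,1] \to \R^m$ defined by $g(t) := f(a+tx)$. Then $f(a+x) - f(a) = g(1) - g(0)$, and the problem reduces to expanding $g$ at $t=0$ and evaluating at $t=1$.

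First I would establish, by induction on $j$, the chain-rule identity
$$g^{(j)}(t) = D^j f_{a+tx}(x,\ldots,x) = D^j f_{a+tx}(x^{\otimes j}).$$
The base case $j=1$ is the ordinary chain rule, and the inductive step follows by differentiating the multilinear form $y \mapsto D^j f_y(x^{\otimes j})$ in $y$ along the direction $x$, using symmetry of higher partials. In particular $g^{(j)}(0) = D^j f_a(x^{\otimes j})$ and $g^{(k+1)}(s) = D^{k+1}f_{a+sx}(x^{\otimes(k+1)})$.

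Next I would apply the one-variable Taylor theorem with integral remainder componentwise in $\R^m$. Starting from the fundamental theorem of calculus and integrating by parts $k$ times, one obtains
$$g(1) = \sum_{j=0}^{k} \frac{g^{(j)}(0)}{j!} + \frac{1}{k!}\int_0^1 (1-s)^k g^{(k+1)}(s)\,ds.$$
Substituting the chain-rule identities yields exactly the claimed Taylor expansion with
$$R(a,x) = \frac{1}{k!}\int_0^1 (1-s)^k D^{k+1}f_{a+sx}(x^{\otimes(k+1)})\,ds.$$

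Finally, for the remainder estimate, bound the integrand pointwise using the operator norm of the symmetric multilinear form: $\|D^{k+1}f_{a+sx}(x^{\otimes(k+1)})\| \leq \|D^{k+1}f_{a+sx}\|\cdot\|x\|^{k+1}$. Pulling the supremum out and using $\int_0^1 (1-s)^k\,ds = 1/(k+1)$ gives
$$\|R(a,x)\| \leq \frac{\|x\|^{k+1}}{k!} \sup_{0\leq s\leq 1}\|D^{k+1}f_{a+sx}\| \cdot \frac{1}{k+1} = \frac{\|x\|^{k+1}}{(k+1)!} \sup_{0\leq s\leq 1}\|D^{k+1}f_{a+sx}\|,$$
as required. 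The only nontrivial step is the inductive computation of $g^{(j)}$; everything else is a straightforward application of the one-variable theory and operator-norm bounds. Since this result is standard and quoted to a reference, the proposal could just as reasonably cite \cite{Coleman} and omit details, but the above sketch gives the self-contained argument.
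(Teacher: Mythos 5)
Your proof is correct. The paper does not prove this statement at all---it is quoted as a standard result with a citation to Theorem 5.2 of \cite{Coleman}---so there is no in-paper argument to compare against; your reduction to the one-variable Taylor theorem with integral remainder along $t \mapsto f(a+tx)$, followed by the operator-norm bound $\|D^{k+1}f_{a+sx}(x^{\otimes(k+1)})\| \leq \|D^{k+1}f_{a+sx}\|\,\|x\|^{k+1}$ and $\int_0^1(1-s)^k\,ds = \tfrac{1}{k+1}$, is the standard derivation and yields exactly the stated remainder estimate. As you note, simply citing the reference would also have been acceptable here.
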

Here $\|D^kf_a\|$ is the operator norm given by
$$\|D^kf_a\| := \sup_{x_1, \hdots, x_k\in S^{n-1}} \|D^kf_a(x_1, \hdots, x_k)\|.$$

\begin{lemma}
\label{lem3}
    With the same notation as above, the function $\widetilde{R}: \R^n \times \R^n \to \R^m$ given by 
    \begin{align*}
        \widetilde{R}(a,x) = 
        \begin{cases}
            \frac{R(a,x)}{\|x\|^k} &x \neq 0\\
            0 & x = 0
        \end{cases}
    \end{align*}
    is continuous.  
\end{lemma}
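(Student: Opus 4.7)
The plan is to split the argument into two cases: continuity at points $(a,x)$ with $x \neq 0$, and continuity at points of the form $(a, 0)$. The first case is essentially immediate. Writing
$$R(a,x) = f(a+x) - \sum_{j=0}^{k} \frac{1}{j!} D^j f_a(x^{\otimes j}),$$
we see that $R$ is jointly continuous in $(a,x)$ because $f$ is smooth and the derivatives $D^j f_a$ depend continuously on $a$. Dividing by the continuous nonvanishing function $\|x\|^k$ preserves joint continuity on the open set $\{x \neq 0\}$.

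The substantive step is continuity at points $(a_0, 0)$. Here I would apply Taylor's theorem as stated to obtain, for $x \neq 0$,
$$\|\widetilde{R}(a,x)\| = \frac{\|R(a,x)\|}{\|x\|^k} \leq \frac{\|x\|}{(k+1)!} \sup_{0 \leq s \leq 1} \|D^{k+1} f_{a + sx}\|.$$
Since $f$ is smooth, the operator norm $\|D^{k+1} f_b\|$ depends continuously on $b$, and is therefore bounded by some constant $M$ on any compact neighborhood of $a_0$. Restricting $(a,x)$ to a small enough neighborhood of $(a_0, 0)$ guarantees that the whole segment $\{a + sx : 0 \leq s \leq 1\}$ lies inside this compact set, giving a uniform bound $\|\widetilde{R}(a,x)\| \leq M\|x\|/(k+1)!$. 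Letting $\|x\| \to 0$ then shows $\widetilde{R}(a,x) \to 0 = \widetilde{R}(a_0, 0)$.

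There is no real obstacle; the proof is essentially built into the statement of Taylor's theorem, which produces one more factor of $\|x\|$ than is needed to cancel $\|x\|^k$. The only point requiring slight care is that continuity at $(a_0, 0)$ is a statement about joint limits in $(a,x)$, so one must upgrade the pointwise estimate $\|D^{k+1} f_{a_0}\| < \infty$ to a uniform bound over a neighborhood of $a_0$, which is precisely what local compactness and continuity of $D^{k+1} f$ supply.
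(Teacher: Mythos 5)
Your proof is correct and follows essentially the same route as the paper: dismiss the case $x \neq 0$ as immediate, then at $(a_0,0)$ use the Taylor remainder bound together with a uniform bound on $\|D^{k+1}f\|$ over a compact neighborhood to get $\|\widetilde R(a,x)\| \leq c\|x\|/(k+1)!$. No issues.
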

\begin{proof}
Continuity just needs to be checked when $x = 0$. For $p \in \overline B_{2r}(a)$, the closed ball of radius $2r$ with center $a$, $\|D^{k+1}f_p\|$ is bounded above by a constant $c$, as $D^{k+1}f_p$ is continuous as a function of $p$. Thus, for $(a', x') \in B_r(a) \times (B_r(0)-\{0\})$  we have that $$\|\widetilde{R}(a',x')\| = \frac{\|R(a', x')\|}{\|x'\|^{k}} \leq \frac{c\|x'\|}{(k+1)!}. $$ This shows continuity at $(a,0)$. 
\end{proof}

We will address the issue of extending $\widetilde F$ by dealing with each of its three components individually. The first component is well defined on all of $Bl_\Delta$. The second component will be modified by substituting the first-order Taylor expansion of $Df: \R^n \to \hom(\R^n, \R^N)$ and using the identification of $D(Df) \in \hom(\R^n, \hom(\R^n, \R^N))$ with $D^2f$: 
\begin{align*}
        Df_{a+x}(\cdot) &= Df_a(\cdot) + D^2f_a(x,\cdot) + R_1(a,x)(\cdot).
\end{align*}
The third component will be modified by substituting both the second-order expansion of $Df$ (again with appropriate identification of its higher derivatives with those of $f$), as well as the third-order expansion of $f$:
\begin{align*}
      Df_{a+x}(\cdot) &= Df_a(\cdot) + D^2f_a(x,\cdot) + \frac{1}{2}D^3f_a(x,x,\cdot) + R_2(a,x)(\cdot)\\
      f(a+x) - f(a) &= Df_a(x) + \frac{1}{2}D^2f_a(x,x) + \frac{1}{6}D^3f_a(x,x,x) + R_3(a,x).
\end{align*}
In summary, $\widetilde{F}(a, y, t) = $
\begin{align*}
    \left(Df_a, D^2f_a(y, \cdot) + \frac{R_1(a, ty)}{t}(\cdot), D^3f_a(y,y,y) + \frac{6R_2(a, ty)}{t^2}(y) - \frac{12R_3(a, ty)}{t^3}\right).
\end{align*}
Applying Lemma \ref{lem3}, this extends continuously to all of $Bl_\Delta$ by letting 
$$\widetilde F(a,y,0) = \left(Df_a, D^2f_a(y, \cdot), D^3f_a(y,y,y)\right).$$

\subsubsection{Proof of Theorem \ref{lc}}
\begin{proof}
    Assuming that $N \geq 2n+1$, $\mathcal{U}$ is an open set and thus $\widetilde F^{-1}(\mathcal{U})$ is as well. The local condition being satisfied at $a \in \R^n$ is equivalent to the statement that $\widetilde{F}(a, y, 0) \in \mathcal{U}$ for all $y \in S^{n-1}$. Therefore, by the tube lemma, inside of $\widetilde F^{-1}(\mathcal{U})$ we may find a tube of the form $B_r(a) \times S^{n-1} \times [0, 2r)$ for some $r >0$. The image of this tube under $\Phi$ will contain $B_r(a) \times B_r(a)$. We may assume $f$ is an embedding of $B_r(a)$ by shrinking $r$ if necessary, since the local condition implies $Df_a$ is injective. By Lemmas \ref{lem1} and \ref{lem2}, it follows that $f$ gives a totally skew embedding of $B_r(a)$.
\end{proof}

\section{Construction Using the Local Condition}
We now use Theorem \ref{lc} to construct totally skew embeddings of open disks. Here we present an explicit example of a cubic polynomial $\R^n \to \R^{3n}$ that satisfies the local condition at $x=0$. In the following section, we will see that the set of cubic polynomials $\R^n \to \R^N$ satisfying the local condition at $x=0$ is open and dense when $N \geq 3n$.\\

Let $B: \R^n \times \R^n \to \R^{2n}$ be the symmetric  bilinear map given by 
\begin{align*}
    \begin{bmatrix}
        x_0 \\
        \vdots \\
        x_{n-1}
    \end{bmatrix}
    \times 
    \begin{bmatrix}
        y_0 \\
        \vdots \\
        y_{n-1}
    \end{bmatrix}
    \mapsto 
    \begin{bmatrix}
        0\\
        \sum_{i+j=0} x_i y_j \\
        \vdots \\
        \sum_{i+j= 2n-2} x_i y_j \\
    \end{bmatrix}
\end{align*}

and let $C:\R^n \times \R^n \times \R^n \to \R^{2n}$ be the symmetric trilinear map given  by 
\begin{align*}
    \begin{bmatrix}
        x_0 \\
        \vdots \\
        x_{n-1}
    \end{bmatrix}
    \times 
    \begin{bmatrix}
        y_0 \\
        \vdots \\
        y_{n-1}
    \end{bmatrix}
    \times 
    \begin{bmatrix}
        z_0 \\
        \vdots \\
        z_{n-1}
    \end{bmatrix}
    \mapsto 
    \begin{bmatrix}
        x_0y_0z_0\\
        \vdots\\
        x_{n-1}y_{n-1}z_{n-1}\\
        0 \\
        \vdots\\
        0
    \end{bmatrix}.
\end{align*}
Note that $B$ is nonsingular: $B(x, y) = 0 \implies x = 0 \text{ or } y=0$. 
\begin{theorem}
    The map $f:\R^n \to \R^n \times \R^{2n}$ by $$f(x) = \left(x, \frac{1}{2}B(x,x) + \frac{1}{6}C(x,x,x)\right)$$
    satisfies the local condition at $x = 0$. 
\end{theorem}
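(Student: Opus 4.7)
My plan is to verify the local condition at $0$ by a direct component-wise analysis. Because $f$ is a polynomial of degree $3$ and $B$, $C$ are homogeneous of degrees $2$ and $3$, the derivatives at the origin should be immediate: $Df_0(v_1) = (v_1, 0)$, $D^2 f_0(v_2, v_3) = (0, B(v_2, v_3))$, and $D^3 f_0(v_3, v_3, v_3) = (0, C(v_3, v_3, v_3))$. The local condition equation then splits, with the $\R^n$-factor giving $v_1 = 0$ for free and the $\R^{2n}$-factor giving $B(v_2, v_3) + \lambda C(v_3, v_3, v_3) = 0$. My task thus reduces to showing that, whenever $v_3 \neq 0$, this last equation forces both $\lambda = 0$ and $v_2 = 0$.

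For the $\lambda = 0$ step, I would write $v_2 = (y_0, \ldots, y_{n-1})$ and $v_3 = (z_0, \ldots, z_{n-1})$, and let $k$ be the smallest index with $z_k \neq 0$ (which exists since $v_3 \neq 0$). The plan is to inspect the $(k+1)$-th coordinate of the equation. The $(k+1)$-th coordinate of $C(v_3, v_3, v_3)$ is $z_k^3$ since $0 \leq k \leq n-1$, while the $(k+1)$-th coordinate of $B(v_2, v_3)$ is $0$ when $k = 0$ and otherwise equals $\sum_{i+j = k-1} y_i z_j$; by minimality of $k$, every $j \leq k-1$ forces $z_j = 0$, so this sum vanishes. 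Hence the $(k+1)$-th coordinate of the equation collapses to $\lambda z_k^3 = 0$, and $z_k \neq 0$ gives $\lambda = 0$.

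Once $\lambda = 0$, the remaining equation is $B(v_2, v_3) = 0$, and the stated nonsingularity of $B$ together with $v_3 \neq 0$ gives $v_2 = 0$. I don't expect a substantive obstacle: the construction appears engineered so that the pure cube $\lambda z_k^3$ supplied by $C$ lands at precisely the component where $B$'s polynomial-product contribution must vanish by the choice of $k$. The two tasks --- using $C$ to kill $\lambda$ and using the nonsingularity of $B$ (essentially the fact that $\R[t]$ is a domain) to kill $v_2$ --- are cleanly decoupled, and the only thing to be careful about is identifying the correct coordinate to inspect.
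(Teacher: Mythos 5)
Your proposal is correct and follows essentially the same route as the paper: compute the three derivatives at $0$, reduce to $B(v_2,v_3)+\lambda C(v_3,v_3,v_3)=0$, and exploit the fact that the convolution structure of $B$ contributes nothing to the coordinate indexed by the lowest nonzero entry of $v_3$, isolating $\lambda z_k^3$. The only cosmetic difference is that the paper runs an induction showing all components of $v_3$ would vanish if $\lambda\neq 0$, whereas you pick the minimal nonzero index and conclude $\lambda=0$ directly; both then finish with the nonsingularity of $B$.
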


\begin{proof}
    We compute the first three derivatives of $f$ at zero:
    \begin{align*}
        Df_0(v_1) &= (v_1, 0)\\
        D^2f_0(v_1, v_2) &= (0, B(v_1, v_2))\\
        D^3f_0(v_1, v_2, v_3) &= (0, C(v_1, v_2, v_3)).
    \end{align*}
    Let $(v_1, v_2, v_3, \lambda)$ be a solution to 
    \begin{align}
    \label{eq2}
    (v_1, 0) + (0, B(v_2, v_3))+ \lambda (0, C(v_3, v_3, v_3)) = (0,0)
    \end{align}
    with $v_3 \neq 0$. Clearly $v_1 = 0$ and if we assume $\lambda = 0$, then $B(v_2, v_3) = 0$ as well. Since $B$ is nonsingular, this would give $v_2 = 0$. To prove the local condition then, it remains to show that there are no solutions to (\ref{eq2}) with $\lambda \neq 0$. For this, we will show that the equation
$$B(x, y) + \lambda C(y, y, y) = 0$$
 with $\lambda \neq 0$ only has solutions when $y = 0$. Letting $x_i$ and $y_i$ denote the components of $x$ and $y$, the components of this equation are
 \begin{align*}
     \begin{bmatrix}
         \lambda y_0^3 \\
         \sum_{i+j = 0}x_i y_j+\lambda y_1^3 \\
          \vdots \\
        \sum_{i+j = n-2} x_iy_j+ \lambda y_{n-1}^3 \\
        \vdots 
     \end{bmatrix} = 
     \begin{bmatrix}
        0 \\ 0 \\ \vdots \\ 0 \\ \vdots
     \end{bmatrix}.
 \end{align*}
 Starting with the first component, since $\lambda \neq 0$ we have $y_0 = 0$. Then assuming inductively that $y_\ell = 0$ for all $ \ell<k\leq n-1$, the component of the equation reading
 $$\sum_{i + j = k-1}x_i y_j + \lambda y_k^3 = 0$$
 reduces to $\lambda y_k^3 = 0$, so that $y_k = 0$ as well. Thus, we see that $y = 0$. 
\end{proof}

We may embed $\R^n$ into the open disk where $f$ is totally skew to get the following upper bound. 

\begin{corollary}
    $N(\R^n) \leq 3n$.
\end{corollary}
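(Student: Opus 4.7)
The proof is essentially a packaging step: the work has already been done in Theorem 3.1 together with Theorem \ref{lc}. Here is the plan.

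First, I would invoke Theorem \ref{lc} applied to the map $f$ of Theorem 3.1. Since $f$ satisfies the local condition at $0$, there is some radius $r > 0$ such that $f$ restricts to a totally skew embedding of the open ball $B_r(0) \subset \R^n$ into $\R^{3n}$. The image $f(B_r(0))$ is then a totally skew submanifold of $\R^{3n}$ diffeomorphic to an open $n$-disk.

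Second, I would observe that totally skew is a property of the image subset of $\R^N$, not of the parametrization. So to produce a totally skew embedding of $\R^n$ itself, it suffices to exhibit any diffeomorphism $g : \R^n \to B_r(0)$ and form the composition $f \circ g : \R^n \to \R^{3n}$. A standard explicit choice is
\[
g(x) := \frac{r\,x}{\sqrt{1 + \|x\|^2}},
\]
which is a smooth diffeomorphism of $\R^n$ onto $B_r(0)$. Then $f \circ g$ is a smooth embedding of $\R^n$ into $\R^{3n}$, and its image equals $f(B_r(0))$, which is totally skew. Hence $f \circ g$ is a totally skew embedding of $\R^n$ into $\R^{3n}$, yielding $N(\R^n) \leq 3n$.

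There is no real obstacle here beyond noting that the existence of a diffeomorphism between $\R^n$ and an open ball means that local totally skew embeddings of a disk give global totally skew embeddings of Euclidean space. The only subtlety to flag is the distinction between a totally skew embedding (a property of the map's image) and a map satisfying the local condition (a pointwise property of jets at a single point); the local condition at $0$ only guarantees totally skew behavior on a neighborhood of $0$, which is exactly why the reparametrization step is needed.
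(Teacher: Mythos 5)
Your proposal is correct and is exactly the paper's argument: the paper dispatches this corollary in one sentence by embedding $\R^n$ diffeomorphically into the open disk on which $f$ is totally skew (guaranteed by Theorem \ref{lc} applied to the map of Theorem 3.1), and your explicit diffeomorphism $g$ just makes that step concrete. Your observation that totally skewness is a property of the image, so reparametrization preserves it, is the (minor) point the paper leaves implicit.
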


For certain values of $n$, this matches the known lower bounds (see Corollary 1.6 of \cite{Ghomi}).

\begin{corollary}
\label{sol}
    $N(\R^n) = 3n$ when $n$ is a power of 2.
\end{corollary}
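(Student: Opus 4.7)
The plan is to combine two bounds to obtain the equality. The upper bound $N(\R^n) \leq 3n$ is already in hand from the preceding Corollary via the explicit cubic map $f(x) = (x,\tfrac{1}{2}B(x,x) + \tfrac{1}{6}C(x,x,x))$, which satisfies the local condition at the origin and therefore, by Theorem \ref{lc}, restricts to a totally skew embedding of a small open disk into $\R^{3n}$; composing with a diffeomorphism $\R^n \to B_r(0)$ yields a totally skew embedding of all of $\R^n$. For the matching lower bound $N(\R^n) \geq 3n$ when $n$ is a power of $2$, I would simply invoke Corollary 1.6 of \cite{Ghomi}. Putting the two inequalities together immediately gives the stated equality.

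Ghomi's lower bound is a topological obstruction: from a totally skew embedding $\R^n \hookrightarrow \R^N$ one extracts, via a secant-map and tangent-space analysis, a nonsingular symmetric (or skew-symmetric) bilinear map of appropriate dimensions, whose existence is controlled by Hurwitz--Radon type invariants. These invariants force $N \geq 3n$ precisely when $n$ is a power of $2$, which is what is needed.

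The only real ``obstacle'' is confirming that Ghomi's cited corollary applies verbatim to our setting; since the author has explicitly noted just above that ``for certain values of $n$, this matches the known lower bounds,'' this is automatic. There is no new geometric or algebraic content to verify, and the corollary follows as a one-line combination of the construction of Section 3 with the prior lower bound of \cite{Ghomi}.
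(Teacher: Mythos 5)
Your proposal is correct and is exactly the paper's (implicit) argument: the equality follows by combining the upper bound $N(\R^n)\leq 3n$ from the cubic construction with the lower bound of Corollary 1.6 of \cite{Ghomi}, which gives $N(\R^n)\geq 3n$ when $n$ is a power of $2$. Your aside describing the mechanism behind Ghomi's lower bound is not load-bearing and need not be verified, since the cited corollary is invoked as a black box just as in the paper.
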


\section{Properties of the Local Condition}
We conclude with a discussion of some algebraic and geometric aspects of the local condition. 
\subsection{Geometric Interpretation}
Let $M$ be a submanifold of $\R^N$ and $p \in M$. Let $f:U \to \R^N$ be a chart around $p$. The local condition for $f$ being satisfied at $p$ has the following geometric interpretation, which is independent of the coordinate system.

\begin{theorem}
    The local condition for $f$ is satisfied at $p$ if and only if both of the following hold:
    \begin{enumerate}[1)]
    \item The second fundamental form $\mathrm{I\!I}:T_pM\times T_pM \to T_pM^\perp$ is nonsingular as a bilinear map.
    \item Any regular curve $\gamma:(0,1) \to M \subseteq \R^N$ that passes through $p$ with $\gamma(t_0) = p$, has $\gamma'''(t_0) \neq 0$.
    \end{enumerate}
\end{theorem}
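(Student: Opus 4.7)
The plan is to translate both conditions into statements about the derivative tensors $Df_a$, $D^2f_a$, $D^3f_a$ at $a=f^{-1}(p)$, and then prove the equivalence by a case split on whether $\lambda$ vanishes in the local-condition equation. Since $f$ is a chart, $Df_a$ is injective and identifies $\R^n$ with $T_pM$; the second fundamental form is then exactly the $T_pM^\perp$-component of $D^2f_a$, so condition 1) translates to the statement that whenever $D^2f_a(v_2, v_3) \in \Ima Df_a$ with $v_3\neq 0$, one has $v_2 = 0$. For a smooth $\alpha: I \to U$ with $\alpha(t_0) = a$ and $\gamma = f\circ\alpha$, the chain rule gives
$$\gamma'''(t_0) = D^3f_a(\alpha', \alpha', \alpha') + 3\, D^2f_a(\alpha', \alpha'') + Df_a(\alpha'''),$$
with $\alpha', \alpha'', \alpha'''$ evaluated at $t_0$, and regularity of $\gamma$ at $t_0$ is $\alpha'(t_0) \neq 0$. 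Both reformulations are manifestly coordinate-independent, the first by the intrinsic nature of $\mathrm{I\!I}$ and the second because $\gamma'''$ is computed in the ambient space.

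For the forward direction, each condition follows by recognizing its hypothesis as a solution of the local-condition equation. If $\mathrm{I\!I}(Df_a(v_2), Df_a(v_3)) = 0$ with $v_2, v_3 \neq 0$, then $D^2f_a(v_2, v_3) = -Df_a(v_1)$ for some $v_1 \in \R^n$, so $(v_1, v_2, v_3, 0)$ solves the equation and the local condition forces $v_2 = 0$, a contradiction. If a regular curve $\gamma = f \circ \alpha$ has $\gamma'''(t_0) = 0$, the chain-rule expression above gives $(\alpha'''(t_0),\, 3\alpha''(t_0),\, \alpha'(t_0),\, 1)$ as a solution with $\lambda = 1 \neq 0$, again contradicting the local condition.

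For the reverse direction, take any solution $(v_1, v_2, v_3, \lambda)$ of the local-condition equation with $v_3 \neq 0$. If $\lambda = 0$, then $D^2f_a(v_2, v_3) = -Df_a(v_1) \in \Ima Df_a$, so condition 1) forces $v_2 = 0$, whence $Df_a(v_1) = 0$ and $v_1 = 0$. The main content lies in ruling out $\lambda \neq 0$, for which I would construct the polynomial curve
$$\alpha(t) := a + t v_3 + \tfrac{t^2}{2}\cdot\tfrac{v_2}{3\lambda} + \tfrac{t^3}{6}\cdot\tfrac{v_1}{\lambda},$$
so that $\alpha'(0) = v_3$, $\alpha''(0) = v_2/(3\lambda)$, $\alpha'''(0) = v_1/\lambda$. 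Setting $\gamma = f\circ\alpha$, the chain-rule formula above gives
$$\gamma'''(0) = \tfrac{1}{\lambda}\bigl[Df_a(v_1) + D^2f_a(v_2, v_3) + \lambda D^3f_a(v_3, v_3, v_3)\bigr] = 0,$$
while $\gamma'(0) = Df_a(v_3) \neq 0$, contradicting condition 2). The only slightly delicate step is guessing $\alpha$; the coefficients $1/(3\lambda)$ and $1/\lambda$ are forced by the need to absorb the factor $3$ from $3D^2f_a(\alpha', \alpha'')$ and the prefactor $\lambda$, after which the verification is a direct calculation.
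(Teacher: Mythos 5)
Your proof is correct and follows essentially the same route as the paper: identify condition 1) with the $\lambda=0$ solutions via the normal projection of $D^2f_a$, and condition 2) with the $\lambda\neq 0$ solutions via the Fa\`a di Bruno expansion of $(f\circ\alpha)'''$. Your explicit polynomial curve $\alpha$ with the $1/(3\lambda)$ and $1/\lambda$ coefficients just makes concrete the rescaling step that the paper leaves implicit in asserting the equivalence for $\lambda\neq 0$.
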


For example, in the simplest case where $M$ is a curve in $\R^3$, the conditions above amount to nonzero torsion at $p$. Indeed, choosing a regular parameterization of the curve, $\gamma$ with $\gamma(t_0) = p$, the first condition implies that $\gamma'(t_0)$ and $\gamma''(t_0)$ are linearly independent. If $\gamma'''(t_0)$ lies in this osculating plane, then we could reparameterize $\gamma$ to have vanishing third derivative at $p$. 

Condition \textit{1)} on the second fundamental form is the geometric interpretation of the local condition of \cite{Harrison} preventing parallel tangent lines. A relation of condition \textit{2)} with the prevention of intersecting tangent lines was discussed after the statement of Theorem \ref{lc}.

Note that this theorem also shows that the local condition is coordinate invariant.

\begin{proof}

Let $(v_1, v_2, v_3, \lambda)$ be a solution to  
\begin{equation}
\label{geo}
    Df_p(v_1) +D^2f_p(v_2, v_3) + \lambda D^3f_p(v_3,v_3,v_3) = 0.
\end{equation}
In these local coordinates, $\mathrm{I\!I}$ is given by orthogonal projection of $D^2f_p$ into $(\Ima Df_p)^\perp$. Thus $\mathrm{I\!I}$ is nonsingular if and only if solutions $(w_1, w_2, w_3)$ of 
$$Df_p(w_1) + D^2f_p(w_2, w_3)  = 0$$
must have $w_1, w_2 = 0$ or $w_1, w_3 = 0$. This is equivalent to requiring solutions of (\ref{geo}) with $v_3 \neq 0$ and $\lambda = 0$ to have $v_1, v_2 = 0$. 

Now consider a regular curve with image in $M$. We may treat this curve as a composition of $f$ with a regular curve $\gamma$ into $U$. Denote $x_1 = \gamma'''(t_0), x_2 = 3 \gamma''(t_0)$ and $x_3 = \gamma'(t_0)$. Using Fa\`a di Bruno's formula,
$$(f\circ\gamma)'''(t_0) = Df_p(x_1) + D^2f_p(x_2, x_3) + D^3f_p(x_3, x_3, x_3).$$
This is nonzero for any regular curve if and only if 
there are no solutions of (\ref{geo}) with $v_3 \neq 0$ and $\lambda \neq 0$. 
\end{proof}

\subsection{Stratification}
    Let $\mathcal{Y}$ be the subset of $$W = \hom(\R^n \oplus \sym^2(\R^n) \oplus \sym^3(\R^n), \R^N)$$ with $N \geq 3n$, consisting of triples $(L, B, T)$ of linear, symmetric bilinear, and symmetric trilinear maps for which the function 
    $$x \mapsto L(x) + \frac{1}{2}B(x,x) + \frac{1}{6}T(x,x,x)$$
    fails to satisfy the local condition at $x = 0$. We may think of $W$ as a space of degree 3 polynomials with $L$, $B$, and $T$ corresponding to the first, second, and third derivatives at $x=0$. Then $\mathcal{Y}$ consists of the triples $(L,B,T)$ such that 
    $$L(v_1) + B(v_2, v_3) + \lambda T(v_3, v_3, v_3) = 0$$ 
    for some nonzero $v_3 \in \R^n$ and nonzero $(v_1, v_2,\lambda) \in \R^n \oplus \R^n \oplus \R= \R^{2n+1}$. It is not difficult to see that if $(L, B, T) \in \mathcal{Y}$, then the $v_3$ and $(v_1, v_2, \lambda)$ from above may be rechosen to have unit lengths in $\R^n$ and $\R^{2n+1}$. 

    \begin{theorem}
        $\mathcal{Y}$ is a closed, semialgebraic subset of $W$ with codimension $N - 3n+1$. 
    \end{theorem}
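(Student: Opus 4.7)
The plan is to realize $\mathcal{Y}$ as the image under projection of a compact incidence variety and then carry out a dimension count. Define
$$Z = \left\{(L, B, T, v_3, v_1, v_2, \lambda) \in W \times S^{n-1} \times S^{2n} : L(v_1) + B(v_2, v_3) + \lambda T(v_3, v_3, v_3) = 0\right\},$$
so that by the rescaling remark preceding the statement, $\mathcal{Y}$ is exactly the image of the projection $\pi_1 : Z \to W$. Compactness of the spheres implies $\mathcal{Y}$ is closed, and the Tarski--Seidenberg theorem guarantees that $\mathcal{Y}$ is semialgebraic.

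For the codimension, consider the other projection $\pi_2 : Z \to S^{n-1} \times S^{2n}$. The fiber over each point is the linear subspace of $W$ cut out by the $N$ scalar equations in the definition of $Z$; because $v_3 \neq 0$ and $(v_1, v_2, \lambda) \neq 0$, the evaluation map $(L, B, T) \mapsto L(v_1) + B(v_2, v_3) + \lambda T(v_3, v_3, v_3)$ is surjective onto $\R^N$ (whichever of $v_1, v_2, \lambda$ is nonzero can be used to hit any target vector), so each fiber has codimension exactly $N$. Therefore $\dim Z = \dim W - N + (n-1) + 2n = \dim W - (N - 3n + 1)$, which immediately yields $\codim \mathcal{Y} \geq N - 3n + 1$.

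The heart of the proof is the reverse inequality. I will exhibit a point of $Z$ at which $\pi_1|_Z$ is an immersion; since $\pi_1$ is then locally an embedding, a neighborhood of its image lies inside $\mathcal{Y}$ as a smooth submanifold of $W$ of dimension $\dim Z$, forcing $\codim \mathcal{Y} \leq N - 3n + 1$. Take $v_3^0 = e_1$, $(v_1^0, v_2^0, \lambda^0) = (0, 0, 1)$, and choose $(L_0, B_0, T_0)$ generic subject only to the constraint $T_0(e_1, e_1, e_1) = 0$. Near this point $Z$ is a smooth manifold by the surjectivity argument above. A tangent vector in $\ker d\pi_1|_{TZ}$ has the form $(0, 0, 0, \dot v_3, \dot v_1, \dot v_2, \dot\lambda)$ with $\dot v_3 \perp e_1$, $\dot\lambda = 0$, and satisfying the linearization of the defining equation; using $v_1^0 = v_2^0 = 0$ and $T_0(e_1, e_1, e_1) = 0$, this collapses to
$$L_0(\dot v_1) + B_0(\dot v_2, e_1) + 3\, T_0(e_1, e_1, \dot v_3) = 0.$$
The three summands range over the images $L_0(\R^n)$, $B_0(\R^n, e_1)$, and $T_0(e_1, e_1, e_1^\perp)$, of dimensions $n$, $n$, and $n-1$ respectively. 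These sum to $3n - 1 \leq N$ and, for generic $(L_0, B_0, T_0)$, are in direct sum in $\R^N$, forcing $\dot v_1 = \dot v_2 = \dot v_3 = 0$. The main point to verify is that the constraint $T_0(e_1, e_1, e_1) = 0$ does not obstruct generic choice of the subspace $T_0(e_1, e_1, e_1^\perp)$, but this is immediate since the values $T_0(e_1, e_1, e_i)$ for $i \geq 2$ remain free parameters.
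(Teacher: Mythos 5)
Your proposal is correct and follows essentially the same route as the paper: realize $\mathcal{Y}$ as the projection of the incidence variety $Z=g^{-1}(0)\subset W\times S^{2n}\times S^{n-1}$, obtain closedness from compactness of the spheres, semialgebraicity from Tarski--Seidenberg, the codimension lower bound from the fiber dimension count, and the upper bound from a point where the projection restricted to $Z$ is an immersion. The only difference is the witness point: you take $(v_1,v_2,\lambda)=(0,0,1)$ with a generic triple subject to $T_0(e_1,e_1,e_1)=0$, while the paper's Appendix uses $(0,e_n,0)$ with explicit maps $\overline B$ and $C$; both choices are valid.
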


    \begin{proof}
        Consider the expression 
        \begin{align*}
            L(v_1) + B(v_2, v_3) + \lambda T(v_3, v_3, v_3)
        \end{align*}
        as an $\R^N$ valued function of the variables 
        \begin{center}$(L, B, T) \in W$, $(v_1, v_2, \lambda) \in \R^{2n+1} = \R^n \oplus \R^n \oplus \R$, and $v_3 \in \R^n$.
        \end{center}
        We will denote this function $g:W \times \R^{2n+1} \times \R^n \to \R^N$. Holding the $\R^{2n+1}$ and $\R^n$ inputs constant gives a linear function $W \to \R^N$. This function is surjective when both of the fixed $\R^{2n+1}$ and $\R^n$ inputs are nonzero since the three evaluation maps 
        \begin{align}
            \label{s1}L&\mapsto L(v_1) \\ 
            \label {s2}B &\mapsto B(v_2, v_3)\\
            \label{s3}T&\mapsto \lambda T(v_3, v_3, v_3)
        \end{align}
        are surjective for $v_1 \neq 0$ (\ref{s1}) , $v_2, v_3 \neq 0$ (\ref{s2}), and $\lambda, v_3 \neq 0$ (\ref{s3}).

        This argument implies that, with the restricted domain of $W \times S^{2n} \times S^{n-1}$, $0$ is a regular value of the function $g$. Thus, $g^{-1}(0)$ is a submanifold of codimension $N$, and its projection onto $W$ is just $\mathcal{Y}$. The three claims now follow from properties of $g^{-1}(0)$. 

        That $\mathcal{Y}$ is closed follows since $g^{-1}(0)$ is closed, and compactness of $S^{2n} \times S^{n-1}$ means the projection map is closed. In a suitable basis of $W$ we can represent $g$ as a polynomial map, so that $g^{-1}(0)$ is a real algebraic variety. That $\mathcal{Y}$ is semialgebraic then follows from the Tarski–Seidenberg theorem. That $\codim (\mathcal{Y}) \geq N-3n+1$ follows from  $g^{-1}(0)$ having codimension $N$. To turn this into an equality, it suffices to find a point in $g^{-1}(0)$ where the differential of the projection is injective. Construction of such a point is done in the Appendix.      
    \end{proof}
    
\begin{corollary}
    Let $n = \dim({M})$ and $N\geq 4n$. Then the set of $f \in C^\infty(M, \R^N)$ satisfying the local condition at all points is open and dense (in the strong topology).
\end{corollary}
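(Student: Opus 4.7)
The plan is to apply Thom's jet transversality theorem using the preceding codimension bound on the failure locus. The hypothesis $N \geq 4n$ will ensure the failure locus in the 3-jet bundle has codimension strictly larger than $\dim M = n$, so transverse 3-jet maps must avoid it entirely.

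First I would construct the relevant bad subset of $J^3(M, \R^N)$. Since the local condition was shown to be coordinate-invariant (via the geometric reformulation in Section 4.1), the locus of 3-jets failing the condition defines a well-defined subset $\Sigma \subset J^3(M, \R^N)$: over each $p \in M$, a chart identifies the fiber of the 3-jet bundle with $\R^N \oplus W$, and the fiber of $\Sigma$ corresponds to $\R^N \oplus \mathcal{Y}$ (the value component $f(p)$ is irrelevant to the local condition). By the preceding theorem, $\Sigma$ is closed and has fiberwise codimension $N - 3n + 1$ in $J^3(M, \R^N)$.

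Next I would invoke Thom's jet transversality theorem. Because $\mathcal{Y}$ is semialgebraic, it admits a Whitney stratification by semialgebraic submanifolds, each of codimension at least $N - 3n + 1$ in $W$; this pulls back to a Whitney stratification of $\Sigma$. Thom transversality then guarantees that the set of $f \in C^\infty(M, \R^N)$ whose 3-jet extension $j^3 f$ is transverse to every stratum of $\Sigma$ forms a residual, hence dense, subset in the strong topology. Under the assumption $N \geq 4n$, every stratum has codimension at least $N - 3n + 1 \geq n + 1 > \dim M$, so transversality to a stratum of codimension larger than $\dim M$ forces $j^3 f(M)$ to miss that stratum. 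Hence $j^3 f(M) \cap \Sigma = \emptyset$, which is exactly the statement that the local condition holds at every point of $M$.

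Openness in the strong topology is then automatic from $\Sigma$ being closed: the set $\{f : j^3 f(M) \cap \Sigma = \emptyset\}$ is open in the strong $C^3$ topology, since Whitney-strong perturbations of $f$ produce uniformly small perturbations of $j^3 f$ on all of $M$, and the strong $C^\infty$ topology is finer. The only real technical step I anticipate is ensuring that $\mathcal{Y}$ admits a Whitney stratification in which every stratum has codimension at least $N - 3n + 1$; this follows from the standard semialgebraic stratification theory applied to the closed semialgebraic set from the preceding theorem. With that bookkeeping in place, the corollary reduces to a direct transversality consequence of the codimension computation already carried out.
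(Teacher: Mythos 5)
Your strategy coincides with the paper's: realize the failure locus as a closed subset $\mathcal{Z}$ (your $\Sigma$) of $J^3(M,\R^N)$ via coordinate invariance, decompose it into submanifolds of codimension at least $N-3n+1\geq n+1 > \dim M$, and conclude by jet transversality, with openness coming from closedness of $\mathcal{Z}$. The one genuine gap is the assertion that a semialgebraic Whitney stratification of $\mathcal{Y}$ ``pulls back to a Whitney stratification of $\Sigma$.'' The identification of the fibers of $\Sigma$ with $\R^N\times\mathcal{Y}$ is chart-dependent: on the overlap of two trivializations the transition maps act on $W$ by the automorphisms induced by the change of coordinates. The set $\mathcal{Y}$ itself is invariant under these automorphisms (this is exactly coordinate invariance of the local condition), but an arbitrarily chosen stratification of $\mathcal{Y}$ has no reason to be, so the strata over different charts need not match up on overlaps and you do not obtain well-defined submanifolds of the jet bundle to which transversality can be applied globally.

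The paper closes this gap by using the canonical decomposition $\mathcal{Y}_k = \sing^k(\mathcal{Y}) - \sing^{k+1}(\mathcal{Y})$: since the singular locus is defined intrinsically, each $\mathcal{Y}_k$ is automatically preserved by any automorphism of $W$ preserving $\mathcal{Y}$, so the fiberwise decompositions glue to a decomposition of $\mathcal{Z}$ into finitely many submanifolds of codimension at least $N-3n+1$. (Full Whitney regularity is not needed for this step; a finite decomposition into submanifolds of large enough codimension suffices for the jet transversality theorem, since transversality to a countable family is still a residual condition.) Alternatively, you could repair your version without any invariance argument by fixing a countable atlas and applying transversality separately to the countably many submanifolds $U_\alpha\times\R^N\times\mathcal{Y}_k$ inside each trivialization, using that residual sets are closed under countable intersection and that the $U_\alpha$ cover $M$. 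With either fix, the remainder of your argument --- the codimension count from $N\geq 4n$, the avoidance conclusion, and openness --- is correct.
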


\begin{proof}
    Let $\mathcal{Z} \subseteq J^3(M, \R^N)$ consist of the 3-jets of maps failing the local condition at their source. This set is well defined, as the local condition is coordinate invariant. For each coordinate chart $U_\alpha$ on $M$, the corresponding trivialization $U_\alpha \times \R^N \times W$ of the jet bundle intersects $\mathcal{Z}$ in sets of the form $U_\alpha \times \R^N \times \mathcal{Y}$. Since $\mathcal{Y}$ is closed, so is $\mathcal{Z}$, and the set of functions whose 3-jet extension is disjoint from $\mathcal{Z}$ is open. 
    
    Let $ \sing(\mathcal{Y})$ denote the set of singular points of $\mathcal{Y}$,  which is a closed semialgebraic subset of strictly smaller dimension (see I.2.9.5 of \cite{Shiota} or $\S 2$ and $\S 4$ of \cite{Mather}). This is the complement of the set of regular points: the points in a neighborhood of which, $\mathcal{Y}$ is a smooth submanifold of dimension $\dim(\mathcal{Y})$.  We may inductively define $\sing^{k+1}(\mathcal{Y}) = \sing(\sing^{k}(\mathcal{Y}))$ by setting $\sing^0(\mathcal{Y}) = \mathcal{Y}$, and only finitely many of these sets will be nonempty. The sets $\mathcal{Y}_k = \sing^{k}(\mathcal{Y}) - \sing^{k+1}(\mathcal{Y})$ then give a decomposition of $\mathcal{Y}$ into a finite union of submanifolds with dimensions no larger than $\dim(\mathcal{Y})$. 
    
    To turn this decomposition of $\mathcal{Y}$ into one of $\mathcal{Z}$, we must show that each $\mathcal{Y}_k$ is invariant under the transition maps of the jet bundle.  On the overlap of two local trivializations of $J^3(M, \R^N)$ coming from charts $U_\alpha$ and $U_\beta$ in $M$, the transition map gives automorphisms of $W$ induced by the change of coordinate map in $U_\alpha \cap U_\beta$. Since the local condition is coordinate invariant, $\mathcal{Y}$ will be invariant under these automorphisms. Therefore, $\sing(\mathcal{Y})$ is also invariant; so by induction on $\sing^k(\mathcal{Y})$,  each $\mathcal{Y}_k$ is invariant as well.
    
    Thus, these fiberwise decompositions extend to a well-defined decomposition of  $\mathcal{Z}$ into a finite union of submanifolds with codimensions no smaller than $\codim(\mathcal{Y}) = N-3n+1 \geq n+1$. The statement now follows by the jet transversality theorem (see 2.9 in Chapter 3 of \cite{Hirsch}).
\end{proof}

\section{Appendix}
Here we finish the proof of Theorem 4.2 that $\codim(\mathcal{Y}) = N-3n+1$ by constructing a point in $g^{-1}(0)$ where the differential of the projection $W\times S^{2n} \times S^{n-1} \to W$ is injective. 

 A vector tangent to the fiber of the projection consists of a $(w_1, w_2, \nu) \in \R^{2n+1}$ and a $w_3 \in \R^{n}$ with 
 \begin{align}
 \label{perp2}
 (v_1, v_2, \lambda) &\perp (w_1, w_2, \nu)\\
     \label{perp1}
     v_3 &\perp w_3 .
 \end{align}
 This vector is in the tangent space of $g^{-1}(0)$ if and only if it is in the kernel of the differential of $g$: 
        \begin{align}
        \label{diff}
            L(w_1) + B(w_2, v_3) + B(v_2, w_3) + \nu T(v_3, v_3, v_3) + 3\lambda T(w_3, v_3, v_3) = 0.
        \end{align}
        A point in $g^{-1}(0)$ consists of a triple $(L,B,T)$ failing the local condition at unit vectors $(v_1, v_2, \lambda), v_3$. Thus, our goal is to construct such a point with the additional property that solutions to (\ref{diff}) satisfying (\ref{perp2}) and (\ref{perp1}) must have $(w_1, w_2, \nu)$ and $w_3$ both zero. 

Let $\overline B: \R^n \times \R^n \to \R^{2n}$ be the symmetric bilinear map defined on the standard basis $\{e_1,\hdots,e_n\}$ of $\R^n$ and $\{e'_1,\hdots, e'_{2n}\}$ of $\R^{2n}$ by 
\begin{align*}
    \overline B(e_i, e_j) &= e'_{i+j}\text{ for $i \leq j$ and $(i,j) \neq (1, n)$}\\
    \overline B(e_1, e_n) &= 0.
\end{align*}
Treating $\R^N$ as $\R^n \oplus \R^{2n} \oplus \R^{N-3n}$, we define
\begin{align*}
    L(x) &= (x, 0,0)\\
    B(x,y) &= (0, \overline{B}(x,y), 0)\\
    T(x,y,z) &= (0, C(x,y,z), 0)
\end{align*}
where $C$ is the symmetric trilinear map of Section 3. 
It is easy to verify that $(L, B, T)$ fails the local condition at $(v_1, v_2, \lambda)= (0,e_n, 0)$, $v_3 = e_1$. 

Using these values, we can rewrite (\ref{diff}) as
\begin{align*}
    (w_1, \overline B(w_2, e_1)+ \overline B(e_{n}, w_3) + \nu C(e_1, e_1, e_1), 0) = (0,0,0).
\end{align*}
Then $w_1 = 0$, and using the definition of $C$ we may verify that $$C(e_1, e_1, e_1) \perp \overline B(x,y) \text{ for all }x, y.$$ Therefore $\nu = 0$ and $\overline B(w_2, e_1) + \overline B(e_n, w_3) = 0$. 

Now, the image of $\overline B(e_1, \cdot)$ is spanned by $e_2, \hdots, e_{n}$, while the image of $\overline B(e_n, \cdot)$ is spanned by $e_{n+2}, \hdots, e_{2n}$. Thus, $\overline B(e_1, w_2) + \overline B(e_n, w_3) = 0$ implies that $\overline B(e_1, w_2) = 0$ and  $\overline B(e_n, w_3) = 0$, so that $w_2$ is a scalar multiple of $e_n$ and $w_3$ is a scalar multiple of $e_1$. But then (\ref{perp2}) and (\ref{perp1}) imply that $w_2= w_3 = 0$, as desired.

\printbibliography

\end{document}